\newtheorem{theorem}{Theorem}
\newtheorem{lemma}[theorem]{Lemma}
\newtheorem{problem}[theorem]{Problem}
\newtheorem{definition}[theorem]{Definition}
\newcommand{\F}{\mathbb{F}}
\newcommand{\Fq}{\mathbb{F}_q}
\newcommand{\Fqn}{\mathbb{F}_q^n}
\newcommand{\veclist}{\vec{x}_1,\ldots,\vec{x}_{k+1}}
\newcommand{\smallveclist}{\vec{x}_1,\ldots,\vec{x}_k}
\title{Partition Rank and Partition Lattices}
\author{Mohamed Omar}
\thanks{Department of Mathematics and Statistics, York University. 4700 Keele St. Toronto, Ontario, Canada. M3J 1P3}
\date{\today}
\begin{document}

\maketitle

\begin{abstract}
We introduce a universal approach for applying the partition rank method, an extension of Tao's slice rank polynomial method, to tensors that are not diagonal. This is accomplished by generalizing Naslund's distinctness indicator to what we call a \emph{partition indicator}. The advantages of partition indicators are two-fold: they diagonalize tensors that are constant when specified sets of variables are equal, and even in more general settings they can often substantially reduce the partition rank as compared to when a distinctness indicator is applied. The key to our discoveries is integrating  the partition rank method with M\"{o}bius inversion on the lattice of partitions of a finite set. Through this we unify disparate applications of the partition rank method in the literature. We then use our theory to address a finite field analogue of a question of Erd\H{o}s, thereby generalizing results of Hart and Iosevich and independently Shparlinski. Furthermore we generalize work of Pach, et al. on bounding sizes of sets avoiding right triangles to bounding sizes of sets avoiding right $k$-configurations.
\end{abstract}





\section{Introduction}

A fundamental concern in extremal combinatorics is how large a subset of an ambient set can be while avoiding a certain property. A hallmark example of such a problem that has received particular attention over the past few decades is the Cap Set problem, which asks to determine the largest size of a subset of $\mathbb{F}_3^n$ avoiding a triple of collinear points. One can see this is precisely the question of determining the largest size of a subset of $\mathbb{F}_3^n$ that avoids a $3$-term arithmetic progression. This question was raised independently in differing contexts \cite{alon1995lattice}, \cite{calderbank1994maximal}, \cite{frankl1987subsets}, dating back as early as the 1980s. In 1995, Meshulam \cite{meshulam1995subsets} proved an upper bound of $O(3^n/n)$ and not until a 2012 article by Bateman and Katz \cite{bateman2012new}  was this significantly tightened to an upper bound of $O(3^n/n^{1+\epsilon})$ for a positive constant $\epsilon$. Despite these bounds, it was conjectured and was a major open problem in additive combinatorics that one could achieve an exponentially small upper bound of $c^n$ with $c<3$. This was finally resolved in a 2017 article by Ellenberg and Gijswijt \cite{capset} building on a breakthrough of Croot, Lev and Pach \cite{crootlevpach} for the analogous $3$-term arithmetic progression problem on $(\mathbb{Z}/4\mathbb{Z})^n$. The key insight was using what Tao \cite{tao2016notes1} later coined as the \emph{slice rank polynomial method}.

The techniques of \cite{crootlevpach} and \cite{capset}  in the language of \cite{tao2016notes1} paved the way for breakthroughs on a plethora of disparate extremal combinatorics problems. Tao \cite{tao2016notes1} used it to find exponentially small lower bounds on sizes of subsets of $\mathbb{F}_p^n$ (with $p$ prime) that guarantee solutions to a homogeneous linear system with variables not all equal. Naslund and Sawin \cite{naslund2017upper} used it to make breakthroughs on the Sunflower Conjecture of Erd\H{o}s and Szemer\'{e}di \cite{erdos1978combinatorial}. Blasiak, et al. used slice rank to find exponentially small bounds on tricolored sum-free sets in abelian groups \cite{blasiak2016cap}.

Despite the ubiquitous applications of the method, it has limitations in providing upper bounds on sizes of sets that avoid $k$-tuples of elements satisfying specific properties when $k \geq 4$. To address this, Naslund \cite{naslund2020partition} introduced the \emph{partition rank method}, and used it to extend the arguments of Ge and Shangguan \cite{ge2020maximum} and obtain a polynomial bound on the maximum size of subsets of $\mathbb{F}_q^n$ (the polynomial in $n$ with exponent in $q$) containing no $k$-right corner, that is $k$ right angles emanating from a point. Naslund further used this approach to improve the best known Erd\H{o}s-Ginzburg-Ziv constants for finite abelian groups \cite{naslund2020exponential} and more recently to address problems in Euclidean Ramsey Theory \cite{naslund2022chromatic}.

Many of the applications of slice rank we've discussed use diagonal tensors to establish bounds on sizes of sets. However, this is not always the case. The work of Ge and Shangguan \cite{ge2020maximum} uses the slice rank method together with a non-diagonal $3$-tensor to find polynomial upper bounds on sizes of subsets in $\Fqn$ ($q$ a power of an odd prime) that avoid a right angle, drastically improving the previous exponential bound due to Bennett \cite{bennett2018occurrence}. The work of Sauermann \cite{sauermann2022finding} uses a non-diagonal tensor together with a characterization of slice rank by Sawin and Tao \cite{tao2016notes} to find exponentially small lower bounds on sizes of sets in $\F_p^n$ ($p$ an odd prime) that guarantee the existence of solutions to certain linear systems where variables are all distinct. However, non-diagonal tensors become more difficult to work with when applying the partition rank method. Naslund mitigates this by introducing the concept of a \emph{distinctness indicator} that diagonalizes a non-diagonal tensor after multiplication by it \cite{naslund2020partition}.

This article introduces a more universal approach for applying the partition rank method in the situation that the tensors in play are non-diagonal by generalizing Naslund's distinctness indicator to what we call a \emph{partition indicator}. Roughly speaking, this allows one to diagonalize a tensor if its behavior is controlled when specified collections of variables are set equal. The key insight is merging the partition rank construction with the theory of partially ordered sets, particularly as it pertains to the lattice of partitions of a set. We not only recover theory in the literature through this framework, but we see how to apply the framework to discover new results. Using our first main theorem, Theorem~\ref{thmmain1}, we find a  polynomial upper bound on sizes of subsets of $\Fqn$ ($q$ an odd prime power) only forming acute angles among them, addressing a finite field analogue of a problem of Erd\H{o}s \cite{erdos} and generalizing work of Hart and Iosevich \cite{hart2008ubiquity} and Shparlinski \cite{shparlinski2010point}. Using our second main theorem, 
we find polynomial upper bounds on sizes of subsets of $\Fqn$ avoiding a general class of properties, an instance of which extends a result of Pach, et al. \cite{pach2020avoiding} from bounding the maximum size of subsets of $\Fqn$ containing no right triangle to containing no right $k$-configuration (that is, $k$ distinct points, every triple of which form right angles). We also re-derive work of Naslund used to improve bounds on Erd\H{o}s-Ginzburg-Ziv constants. 

Our article is organized as follows. We begin in  Section~\ref{secprelim} with preliminaries that introduce set partitions, partition lattices, and the slice rank and partition rank methods. In Section~\ref{secpartitions} we develop our main tool, the partition indicator. We then integrate it into the theory of partition rank to provide opportunity for more widespread applications.  Section~\ref{secapplications} is dedicated to applications of the theory from Section~\ref{secpartitions}, proving the new results bounding sizes of sets having only acute angles among them, right $k$-configurations, and more. We finish in Section~\ref{secfuturedirections} with future directions. 




\section{Preliminaries}\label{secprelim}

\subsection{Partitions}
We start by introducing necessary theory on set partitions. Much of this can be found in \cite{stanley2011enumerative}. Let $k$ be a positive integer. Denote by $\Pi_k$ the set of partitions of $\{1,2,\ldots,k\}$. For any $\pi \in \Pi_k$, we write $\pi$ as $\pi_1\pi_2 \cdots \pi_r$ if $\pi$ is partitioned into $r$ sets (each of these sets is called a block), where $\pi_i$ lists the contents in the $i$th set of the partition for each $i$ (with order not mattering). For instance, if $k=8$ and $\pi=\{\{1,2,6,8\},\{3,4\},\{5,7\}\}$ then $\pi=\pi_1\pi_2\pi_3$ with $\pi_1=1268, \ \pi_2=34, \ \pi_3=57$. Sometimes we write $\pi$ as a contiguous list of integers separated by a vertical bars to denote the separation of blocks. For instance the previous partition may be written as $1268 \vert 34 \vert 57$. An explicit expression for the number of partitions in $\Pi_k$ with exactly $r$ blocks is typically complicated, and so is given its own notation $S(k,r)$. The numbers $S(k,r)$ as $k$ and $r$ vary are known as the \emph{Stirling numbers of the second kind}. Explicit formulas for the Stirling numbers of the second kind are generally  cumbersome, but there are many identities they satisfy (see \cite{stanley2011enumerative}). 

The set $\Pi_k$ forms a partial order $<$ under refinement, meaning if $\pi'$ and $\pi$ are distinct  then $\pi' < \pi$ if every block of $\pi'$ lies in a block of $\pi$. For instance if $\pi'=16 \vert 28 \vert 34 \vert 5 \vert 7$ then $\pi'<\pi$ where $\pi$ is the aforementioned partition above. With this relation, we say $\pi'$ is finer than $\pi$ and $\pi$ is coarser than $\pi'$.  In fact, $\Pi_k$ under the partial order $<$ forms a lattice: every pair of elements has a unique least upper bound and unique greatest lower bound. The lattice has a unique minimal element $\hat{0}$ which consists of the $k$ blocks each of size one and a unique maximal element $\hat{1}$ which consists of the one block $12 \cdots k$. The partitions $\hat{0}$ and $\hat{1}$ being minimal and maximal respectively means $\hat{0} \leq \pi \leq \hat{1}$ for every $\pi \in \Pi_k$. This lattice is also graded: for any $\pi \in \Pi_k$, the \emph{rank} of $\pi$, denoted $\mbox{rk}(\pi)$, is $k$ minus the number of blocks in $\pi$. In this way, $\mbox{rk}(\hat{0})=0, \ \mbox{rk}(\hat{1})=k-1$ and for any $\pi,\pi' \in \Pi_k$ with $\pi$ covering $\pi'$ (meaning no partitions lie between these two partitions under $<$), we have $\mbox{rk}(\pi)=\mbox{rk}(\pi')+1$.

For any finite set $P$ with a partial order $<$ on it, and any field $\mathbb{F}$, one can define the \emph{zeta function} of $P$ on pairs of elements of $P$ given by $\zeta: P \times P \to \mathbb{F}$ with $\zeta(x,y)=1$ if $x \leq y$ and $0$ otherwise. Viewing $\zeta$ as a matrix in $\mathbb{F}^{P \times P}$, with the row and column indices ordered by a linear extension of $P$, $\zeta$ is upper triangular with $1$'s along the diagonals. It therefore has a matrix inverse, which is typically denoted by $\mu$ and is called the \emph{M\"{o}bius function} of $P$ with respect to $<$. Fundamental relations satisfied by $\mu$ are:
\begin{equation}\label{eqnmobius}
\mu(x,y) = \begin{cases}
1  \hspace{0.25in} &\mbox{ if } x=y \\
-\sum_{x \leq z < y} \mu(x,z)  \hspace{0.25in} &\mbox{ if } x<y \\
0  \hspace{0.25in} &\mbox{ otherwise}
\end{cases}
\end{equation}
Observe that the second condition is equivalent to stating that for $x<y$, $\sum_{x \leq z \leq y} \mu(x,z)=0$, a reformulation that is fruitful for our endeavors. The M\"{o}bius function of the poset $\Pi_k$ under refinement is completely understood. Firstly, one can show that $\mu(\hat{0},\hat{1})=(-1)^{k-1}(k-1)!$. Now suppose that $\pi$ has $r$ blocks, $\pi'<\pi$, and that the $i$th block of $\pi$ is split into $\lambda_i$ blocks in $\pi'$. Then the interval $[\pi',\pi]$ in $\Pi_k$ is isomorphic to the product poset $\Pi_{\lambda_1} \times \cdots \times \Pi_{\lambda_r}$ and one can use this to show that $\mu(\pi',\pi)=(-1)^{\lambda_1-1} (\lambda_1-1)! \cdots (-1)^{\lambda_r-1} (\lambda_r-1)!$.

In many applications of the partition rank method we look at functions defined on $k$-tuples from a set where certain members of the $k$-tuples are deemed to be equal. In that light, given a set $X$, field $\mathbb{F}$, and $\pi \in \Pi_k$, define $\delta_{\pi}:X^k \to \mathbb{F}$ by
\[
\delta_{\pi}(x_1,\ldots,x_k) = \begin{cases}
1  \hspace{0.25in} &\mbox{ if } x_i=x_j \mbox{ for every } i,j \mbox{ in the same block of } \pi \\
0  \hspace{0.25in} &\mbox{ otherwise}
\end{cases}
\]
For instance if $k=4$ and $x_1=x_4$ are different than $x_2=x_3$, then $\delta_{\pi}(x_1,x_2,x_3,x_4)=1$ if and only if $\pi \leq 14 \vert 23$. Notice then that $14 \vert 23$ is the coarsest partition $\pi$ for which $\delta_{\pi}(x_1,x_2,x_3,x_4)=1$, and is truly the partition that identifies which variables are equal and which are not. To capture this, for a $k$-tuple $(x_1,x_2,\ldots,x_k) \in X^k$ we define the \emph{partition} of $(x_1,x_2,\ldots,x_k)$ to be the coarsest partition $\pi$ for which $\delta_{\pi}(x_1,\ldots,x_k)=1$. Of particular note is that the partition of $(x_1,x_2,\ldots,x_k)$ is $\hat{0}$ if and only if the $x_i$'s are distinct and is $\hat{1}$ if and only if the $x_i$'s are all equal.

\subsection{Slice and Partition Rank}

We start by introducing the concept of the slice rank of a tensor.

\begin{definition}
Let $X_1,\ldots,X_k$ be finite sets and $T:X_1 \times \cdots \times X_k \to \F$. We say $T$ is a \emph{slice} if it can be written as
\[
T(x_1,\ldots,x_k) = T'(x_i) \cdot T''(x_1,\ldots,x_{i-1},x_{i+1},\ldots,x_k)
\]
for some $i \in \{1,2,\ldots,k\}$, where $T':X_i \to \F$ and $T'':X_1 \times \cdots X_{i-1} \times X_{i+1} \times \cdots \times X_k \to \F$. The \emph{slice rank} of $T$, written \mbox{slice-rank}(T), is the minimum value $r$ such that $T$ can be written as a sum of $r$ slices.
\end{definition}

For instance, the tensor $T:\F_3^3 \to \F_3$ given by $T(x,y,z)=xy+xz+yz$ has slice rank $2$ because we can not factor out a function of any one variable, but $T(x,y,z)=T_1'(x) \cdot T_1''(y,z)+T_2'(y) \cdot T_2''(x,z)$ where
\[
T_1'(x)=x, \ T_1''(y,z)=y+z, \ T_2'(y)=y, \ T_2''(x,z)=z.
\]
Suppose now that $X$ is a set and $A \subseteq X$ with the caveat that $k$-tuples of $A$ avoid a particular property $\mathcal{P}$. In many applications, authors construct a tensor $T:A^k \to \mathbb{F}$ for some field $\mathbb{F}$, and exploit the fact that $A$ avoids $\mathcal{P}$ to make $T$ a diagonal tensor with non-zero diagonal entries. Once this is done, an upper bound on $ \vert A \vert $ can be established through the slice rank of $T$. 

\begin{theorem}[\cite{tao2016notes}]\label{thmslicerank}
Let $A$ be a finite set and $\F$ be a field. Let $T:A^k \to \F$ be such that $T(x_1,x_2,\ldots,x_k) \neq 0$ if and only if $x_1=x_2=\cdots=x_k$. Then the slice rank of $T$ is $ \vert A \vert $.
\end{theorem}

Theorem~\ref{thmslicerank} is particularly useful when $T$ is a polynomial: one can estimate the slice rank by peeling off monomials of low degree in each summand, and counting the number of possible ways to do so. This was the crux of the key arguments in the seminal papers of Croot, Lev, Pach \cite{crootlevpach} and Ellenberg, Gijswijt \cite{capset}. 

With the concept of slice rank established, we now introduce the concept of partition rank. First we need some terminology. Given a nonempty proper subset $S=\{i_1,\ldots,i_m\}$ of $\{1,2,\ldots,k\}$ and a tensor $T: X_{i_1} \times \cdots \times X_{i_m} \to \F$ we abbreviate the expression $T(x_{i_1},\ldots,x_{i_m})$ by $T(\vec{x}_S)$. We interchangeably write $T(\vec{x}_{\pi_i})$ when $S$ happens to be the contents of a block $\pi_i$ of a partition $\pi$. 

\begin{definition}\label{defpartition}
Let $X_1,\ldots,X_k$ be sets and $T:X_1 \times \cdots \times X_k \to \mathbb{F}$. We say $T$ has \emph{partition rank $1$} if there is a nontrivial partition $\pi=\pi_1 \cdots \pi_r$ in $\Pi_k$ and tensors $T_1,T_2,\ldots,T_r$ with $T_i: \prod_{j \in \pi_i} X_j \to \mathbb{F}$ so that
\[
T(x_1,\ldots,x_k) = \prod_{i=1}^r T_i(\vec{x}_{\pi_i}).
\]
The \emph{partition rank} of $T$, written \mbox{partition-rank}(T), is the minimum value $r$ such that $T$ can be written as the sum of $r$ functions of partition rank $1$.
\end{definition}
In other words, a tensor $T$ has partition rank $1$ if it can be written as a product of tensors, each factor using distinct sets of the variables $x_1,\ldots,x_k$. As an example, $T:X^4 \to \F$ given by $T(x_1,x_2,x_3,x_4)=(x_1^2+x_2^2) \cdot \delta(x_3,x_4)$ has partition rank $1$, but its not immediate how to calculate its slice rank for a generic $\F$.

A significant reason to introduce partition rank is that there is an analogue of Theorem~\ref{thmslicerank} that allows us to bound the size of a set using it.

\begin{theorem}[\cite{naslund2020partition}]\label{thmpartitionrank}
Let $A$ be a finite set and $\F$ be a field. Let $T:A^k \to \F$ be a function such that $T(x_1,x_2,\ldots,x_k) \neq 0$ if and only if  $x_1=x_2=\cdots=x_k$. Then the partition rank of $T$ is $ \vert A \vert $.
\end{theorem}

At first glance we notice that if $T:A^k \to \mathbb{F}$ is a diagonal tensor with non-zero diagonal entries then Theorem~\ref{thmslicerank} and Theorem~\ref{thmpartitionrank} imply $ \vert A \vert =\mbox{slice-rank}(T)=\mbox{partition-rank}(T)$ so the partition rank may at first seem to not provided added value. However, in practice, an upper bound on $ \vert A \vert $ is established through these theorems by finding an upper bound on $\mbox{slice-rank}(T)$ in Theorem~\ref{thmslicerank} and analogously $\mbox{partition-rank}(T)$ in Theorem~\ref{thmpartitionrank}. In general we see that the partition rank offers more flexibility than the slice rank: if we have a slice $T:X_1 \times \cdots \times X_k \to \F$, then it can be written as $T'(x_i) \cdot T''(x_1,\ldots,x_{i-1},x_{i+1},\ldots,x_k)$, which has partition rank $1$, but partition rank allows us to split our tensors in many more ways than just peeling off a tensor of one variable. Because of this, it is potentially much easier to get tighter upper bounds on partition-rank($T$) as opposed to slice-rank($T$). 




\section{Partition Rank \& Partition Lattices}\label{secpartitions}

This section presents the main theory of the article, introducing partition indicators and how they provide more universal opportunities to use the partition rank method. We start with a recap of how partition rank is concretely used in literature to develop upper bounds on set sizes, as motivation for the theory we build.

\subsection{Distinctness Indicators}

In \cite{naslund2020partition}, Naslund proved that if $A \subseteq \Fqn$ (here $q$ an odd prime power) contains no \emph{$k$-right-corner}, that is no distinct vectors $\vec{x}_1,\ldots,\vec{x}_{k+1}$ for which $\vec{x}_1-\vec{x}_{k+1},\ldots,\vec{x}_k-\vec{x}_{k+1}$ are mutually orthogonal, then $ \vert A \vert  \leq \binom{n+(k-1)q}{(k-1)(q-1)}$. To find such an upper bound one might attempt to create a diagonal tensor $T:A^{k+1} \to \Fq$, find an upper bound for  $\mbox{slice-rank}(T)$, and apply Theorem~\ref{thmslicerank}. A reasonable start would be the tensor
\[
T(\veclist) = \prod_{j<\ell \leq k} \left(1-\langle \vec{x}_j-\vec{x}_{k+1},\vec{x}_{\ell}-\vec{x}_{k+1} \rangle^{q-1} \right).
\]
We see that 
\[
T(\veclist)=\begin{cases}
1  \hspace{0.15in} &\mbox{ if } \vec{x}_1-\vec{x}_{k+1},\ldots,\vec{x}_k-\vec{x}_{k+1}  \mbox{ are mutually orthogonal } \\
0  \hspace{0.15in} &\mbox{ otherwise }
\end{cases}
\]
so one might suspect then that $T$ is diagonal since $A$ avoids $k$-right corners. However this is subtly not the case. A $k$-right corner requires vectors to be distinct, and it is possible for $T(\veclist)=1$ even if that is not the case. For instance, we can set $\vec{x}_2=\cdots=\vec{x}_k$, have $\vec{x}_1-\vec{x}_{k+1}$ and $\vec{x}_2-\vec{x}_{k+1}$ orthogonal, and meanwhile have $\vec{x}_2-\vec{x}_{k+1}$ self-orthogonal (a possibility in $\Fqn$). To remedy this, Naslund augments $T$ by introducing what he coined as the \emph{distinctness indicator} function $H:A^{k+1} \to \F$, which can be written as a sum of the form
\begin{equation}\label{eqnindicator}
H(\vec{x}_1,\ldots,\vec{x}_{k+1}) = \sum_{\pi \in \Pi_{k+1}} f(\pi) \cdot \delta_{\pi}(\veclist)
\end{equation}
for strategically chosen constants $f(\pi)$ so as to have
\begin{align*}
H(\veclist) = \begin{cases}
1  \hspace{0.25in} &\mbox{ if } \veclist \mbox{ are distinct } \\
(-1)^k \cdot k!  \hspace{0.25in} &\mbox{ if } \vec{x}_1=\cdots=\vec{x}_{k+1} \\
0  \hspace{0.25in} &\mbox{ otherwise.}
\end{cases}
\end{align*}
As a concrete illustration, it turns out when $k=2$ that
\begin{align*}
H(\vec{x}_1,\vec{x}_2,\vec{x}_3) &= 1 \cdot \delta(\vec{x}_1)\delta(\vec{x}_2)\delta(\vec{x}_3) -1 \cdot \delta(\vec{x}_3)\delta(\vec{x}_1,\vec{x}_2) -1 \cdot \delta(\vec{x}_2)\delta(\vec{x}_1,\vec{x}_3) \\
&-1 \cdot \delta(\vec{x}_1)\delta(\vec{x}_2,\vec{x}_3)+0 \cdot \delta(\vec{x}_1,\vec{x}_2,\vec{x}_3).
\end{align*}
This now makes $H(\veclist) \cdot T(\veclist)$ diagonal on $A^k$ with non-zero diagonal entries. More relevant to us is that there is a specific strategy for computing the partition rank of $H \cdot T$, and hence by Theorem~\ref{thmpartitionrank}, a strategy for finding bounds on sizes of restricted sets. The strategy is afforded by the fact that each summand $\delta_{\pi}(\veclist) \cdot T(\veclist)$ in $H \cdot T$ will separate variables when $\pi \neq \hat{1}$, and have controllable partition rank.

This approach to the $k$-right corners problem exemplifies the typical and natural approach used when applying the partition rank method to find an upper bound on the size of a set $A$ for which $k$-tuples avoid a property $\mathcal{P}$.

\begin{enumerate}
    \item[(a)] Find a tensor $T:A^k \to \mathbb{F}$ for some field $\mathbb{F}$ so that
    \[
T(x_1,\ldots,x_k)=\begin{cases}
c_1  \hspace{0.25in} &\mbox{ if } (x_1,\ldots,x_k) \mbox{ satisfies } \mathcal{P} \\
c_2  \hspace{0.25in} &\mbox{ if } x_1=\cdots=x_k \\
0  \hspace{0.25in} &\mbox{ otherwise. }
\end{cases}
\]
for some nonzero $c_2$.
\item[(b)] Introduce a tensor $H:A^k \to \mathbb{F}$ akin to that in Equation~$(\ref{eqnindicator})$ so that
\[
H(x_1,\ldots,x_k) = \begin{cases}
1  \hspace{0.25in} &\mbox{ if } x_1,\ldots,x_k \mbox{ are distinct } \\
c_3  \hspace{0.25in} &\mbox{ if } x_1=\cdots=x_k\\
0  \hspace{0.25in} &\mbox{ otherwise}
\end{cases}
\]
for some nonzero $c_3$.
\item[(c)] $H \cdot T$ is then diagonal with non-zero diagonal entries on $A^k$, so $ \vert A \vert  \leq \mbox{partition-rank}(H \cdot T)$.
\end{enumerate}
The typical conundrum that arises is that it is not always simple to find a tensor  $T:A^k \to \F$ that behaves as in (a). For instance, the tensor introduced in \cite{ge2020maximum} to bound sizes of sets in $\Fqn$ with no right angles is $T:A^3 \to \mathbb{F}_q$ given by
\begin{equation}\label{eqnrightangle}
T(\vec{x},\vec{y},\vec{z})=\delta_{y \neq z} \cdot \langle \vec{y}-\vec{x},\vec{z}-\vec{x} \rangle^{q-1}.
\end{equation}
This is $1$ if $\vec{x},\vec{y},\vec{z}$ are distinct (because $A$ has no triple of vectors forming a right angle) and is $0$ if any pair of vectors are equal. Despite $T$ being non-diagonal, $T$ is constant on $3$-tuples $(\vec{x},\vec{y},\vec{z}) \in A^3$ that have the same partition. One of our main theorems, Theorem~\ref{thmmain1}, explicitly finds an upper bound for $ \vert A \vert $ in terms of $\mbox{partition-rank}(T)$ in such cases. The other, Theorem~\ref{thmmain2}, is more subtle. It replaces the distinct indicator $H$ with our partition indicator, suited for the particular tensor at hand. Multiplying a non-diagonal tensor by an appropriate partition indicator works especially well in the case when the tensor is controlled on $k$-tuples with the same partition. 

\subsection{Partition Indicators}

Let $f:\Pi_k \to \mathbb{F}$ and $A$ be a set of interest. We construct a tensor $I_f:A^k \to \mathbb{F}$ so that, unless $x_1=\cdots=x_k$, $I_f(x_1,\ldots,x_k) = f(\pi)$ where $\pi$ is the partition of $(x_1,\ldots,x_k)$. In this way, if $T:A^k \to \mathbb{F}$, we can aim to use $I_f$ to diagonalize $T$ by replacing it by $I_f \cdot T$. For instance, suppose $T:A^k \to \mathbb{F}$ is $1$ when $(x_1,\ldots,x_k)$ satisfies a property $\mathcal{P}$ and $0$ otherwise, and furthermore that $k$-tuples from $A$ avoid property $\mathcal{P}$. It is then not necessarily the case that $T$ is a diagonal tensor. Now set $f(\hat{0})=1$ and $f(\pi)=0$ for $\pi<\hat{1}$. Then $I_f \cdot T$ will be $0$ on off-diagonal elements of $A^k$ because $T$ is $0$ on distinct $k$-tuples in $A^k$, and $f(\pi)=0$ when $\pi<\hat{1}$. So, $I_f \cdot T$ is $0$ on all $k$-tuples except possibly those with $x_1=\cdots=x_k$.

\begin{definition}[Partition Indicator]
Let $f:\Pi_k \to \mathbb{F}$ for some field $\mathbb{F}$. For a set $A$ define the \emph{partition indicator} of $f$ to be the tensor $I_f:A^k \to \mathbb{F}$ given by  
\[
I_f(x_1,\ldots,x_k) = 
\sum_{\substack{\tau \in \Pi_k \\ \tau < \hat{1}}} \left( \sum_{\pi \in \Pi_k} f(\pi)  \cdot \mu(\pi,\tau) \right) \cdot \delta_{\tau}(x_1,\ldots,x_k).
\]
\end{definition}
The following result is similar in spirit to Lemma 15 in \cite{naslund2020partition}.

\begin{lemma}\label{lempartitionindicator}
\[\displaystyle
I_f(x_1,\ldots,x_k) = \begin{cases}
-\sum_{\pi < \hat{1}} f(\pi) \cdot \mu(\pi,\hat{1})  \hspace{0.25in} &\mbox{ if } x_1=x_2=\cdots=x_k \\
f(\pi)  \hspace{0.25in} & \mbox{ if } (x_1,\ldots,x_k) \mbox{ has partition } \pi \neq \hat{1}
\end{cases}
\]
\end{lemma}

\begin{proof}
Observe that if $x_1=x_2=\cdots=x_k$ then we have that
\begin{align*}
I_f(x_1,\ldots,x_k) =\sum_{\substack{\tau \in \Pi_k \\ \tau < \hat{1}}} \left( \sum_{\pi \in \Pi_k} f(\pi)  \cdot \mu(\pi,\tau) \right)  &= \sum_{\tau<\hat{1}} \sum_{\pi \leq \tau} f(\pi) \cdot \mu(\pi,\tau) \\
&= \sum_{\pi<\hat{1}} f(\pi) \cdot \left( \sum_{\pi \leq \tau < \hat{1}} \mu(\pi,\tau)
\right) \\
&=-\sum_{\pi < \hat{1}} f(\pi) \cdot \mu(\pi,\hat{1}),
\end{align*}
the last equality coming from Equation~(\ref{eqnmobius}). Otherwise, $(x_1,\ldots,x_k)$ has partition $\pi<\hat{1}$ so $\delta_{\tau}(x_1,\ldots,x_k)=0$ unless $\tau \leq \pi$ in which case it is $1$. From this,
\[
I_f(x_1,\ldots,x_k) = \sum_{\tau \leq \pi} \sum_{\rho  \leq \tau } f(\rho)  \cdot \mu(\rho,\tau) = \sum_{\rho \leq \pi} f(\rho) \cdot \left( \sum_{\rho \leq \tau \leq \pi} \mu(\rho,\tau) \right).
\]
Again by Equation~(\ref{eqnmobius}) we have $\sum_{\rho \leq \tau \leq \pi} \mu(\rho,\tau) = 0$ unless $\rho=\pi$ in which case it is $\mu(\pi,\pi)=1$, so the sum is indeed $f(\pi)$.
\end{proof}
We can reconcile this with the distinctness indicator. Set $f(\pi)=0$ for all $\hat{0} < \pi \leq \hat{1}$ and $f(\hat{0})=1$. Then by Lemma~\ref{lempartitionindicator}, $I_f$ is $1$ when all $x_i$'s are distinct, $0$ for all other $k$-tuples $(x_1,\ldots,x_k)$ that are not all equal, and if $x_1=\cdots=x_k$, $I_f(x_1,\ldots,x_k)=-\mu(\hat{0},\hat{1})$. Since $\mu(\hat{0},\hat{1})=(-1)^{k-1}(k-1)!$, $I_f$ agrees with the distinctness indicator up to sign.

We now state our first main theorem, providing a mechanism for finding upper bounds on set sizes using partition rank, in the presence of a tensor that is constant on $k$-tuples with the same partition.

\begin{theorem}\label{thmmain1}
Let $f:\Pi_k \to \mathbb{F}$ and suppose $T:A^k \to \mathbb{F}$ is a tensor for which:
\begin{enumerate}
    \item[\emph{(i)}] $T(x_1,x_2,\ldots,x_k)=f(\pi)$ where $\pi$ is the partition of $(x_1,x_2,\ldots,x_k)$,
    \item[\emph{(ii)}] $f(\hat{1}) \neq -\sum_{\pi<\hat{1}} f(\pi) \cdot \mu(\pi,\hat{1})$.
\end{enumerate} 
Then 
\[
 \vert A \vert  \leq \mbox{partition-rank}(T)+(B_k-1)
\]
where $B_k$ is the $k$th Bell number, that is, the number of partitions of $\{1,2,\ldots,k\}$.
\end{theorem}
In many applications of the slice rank and partition rank methods, $k$ is fixed and the bounds on $ \vert A \vert $ of interest depend on data governing the size of $X$, which is typically independent of $k$. For instance, in \cite{crootlevpach},\cite{ge2020maximum},\cite{naslund2020partition},\cite{sauermann2021size}, $X=\Fqn$ and the asymptotics of interest are in $n$ and $q$. This makes the constant $B_k-1$ of neglible concern in most cases, even though it is large as a function of $k$. We remark however that if one genuinely has an interest in strengthening the summand $B_k-1$, it can be replaced by $\left \vert  \left\{\hat{0} < \tau < \hat{1} \ : \ \sum_{\pi \leq \tau} f(\pi) \cdot \mu(\pi,\tau) \neq 0 \right\} \right \vert $ since $\sum_{\pi \leq \tau} f(\pi) \cdot \mu(\pi,\tau)$ is the coefficient of $\delta_{\tau}(x_1,\ldots,x_k)$ in $I_f$.
\begin{proof}[Proof of Theorem~\ref{thmmain1}]
By Lemma~\ref{lempartitionindicator}, 
\[
I_f(x_1,\ldots,x_k)-T(x_1,\ldots,x_k) = \left(-\sum_{\pi<\hat{1}} f(\pi) \cdot \mu(\pi,\hat{1})-f(\hat{1}) \right) \cdot \delta_{\hat{1}}(x_1,\ldots,x_k).
\]
The tensor on the right is a diagonal tensor with all diagonal entries equal to $-\sum_{\pi<\hat{1}} f(\pi) \cdot \mu(\pi,\hat{1})-f(\hat{1})$, a nonzero constant by the second condition in the theorem. Subsequently, by Theorem~\ref{thmpartitionrank},
\begin{align*}
 \vert A \vert  &= \mbox{partition-rank}\left(\left(-\sum_{\pi<\hat{1}} f(\pi)  \cdot \mu(\pi,\hat{1})-f(\hat{1})\right) \cdot \delta_{\hat{1}} \right) \\
&= \mbox{partition-rank}\left( I_f-T\right) \\ &\leq \mbox{partition-rank}(I_f)+\mbox{partition-rank}(T).
\end{align*}
Every summand in $I_f$ has partition rank at most $1$ because for $\pi<\hat{1}$ with $\pi=\pi_1 \cdots \pi_r$, $\delta_{\pi}(x_1,\ldots,x_k)$ is $\prod_{i=1}^r \delta_{\pi_i}(x_1,\ldots,x_k)$. There are $B_k-1$ total summands so this gives us $ \vert A \vert  \leq \mbox{partition-rank}(T)+(B_k-1).$
\end{proof}
Our second main result, Theorem~\ref{thmmain2}, provides a general framework that mimics the approach of multiplying the distinctness indicator by a given tensor to make it diagonal. Instead, it looks at the more general setting of multiplying by a partition indicator $I_f$.

\begin{theorem}\label{thmmain2}
Let $A$ be a finite set and $\mathcal{P}$ be a property on $k$-tuples of $A$. Let $T:A^k \to \mathbb{F}$ and $f:\Pi_k \to \mathbb{F}$ for some field $\mathbb{F}$ be such that 
\begin{enumerate}
    \item[\emph{(i)}] $(x,\ldots,x)$ satisfies $\mathcal{P}$ for any $x \in A$,
    \item[\emph{(ii)}] $T(x_1,\ldots,x_k) = 1$ if $(x_1,\ldots,x_k)$ satisfies $\mathcal{P}$, and $0$ otherwise,
    \item[\emph{(iii)}] if there exists $(x_1,\ldots,x_k)$ with partition $\pi \in \Pi_k \backslash \{\hat{1}\}$ satisfying $\mathcal{P}$ then $f(\pi)=0$,
    \item[\emph{(iv)}] $\sum_{\pi < \hat{1}} f(\pi) \cdot \mu(\pi,\hat{1}) \neq 0$.
\end{enumerate}
Then
\[
 \vert A \vert  \leq \mbox{partition-rank}(I_f \cdot T).
\]
\end{theorem}
\begin{proof}
Let $(x_1,\ldots,x_k) \in A^k$. If the $x_i$'s are not all equal, then $(x_1,\ldots,x_k)$ has partition $\rho \neq \hat{1}$. By Lemma~\ref{lempartitionindicator}, $(I_f \cdot T)(x_1,\ldots,x_k) = f(\rho) \cdot T(x_1,\ldots,x_k).$ By conditions (ii) and (iii) of the theorem, we deduce $(I_f \cdot T)(x_1,\ldots,x_k)=0$. Now suppose $\rho=\hat{1}$, that is $x_1=\cdots=x_k$. Then also by Lemma~\ref{lempartitionindicator}, $(I_f \cdot T)(x_1,\ldots,x_k) =\left( - \sum_{\pi < \hat{1}} f(\pi) \cdot \mu(\pi,\hat{1}) \right) \cdot T(x_1,\ldots,x_k)$. This quantity is non-zero by conditions (i), (ii) and (iv) of the theorem. So altogether, $I_f \cdot T$ is diagonal with non-zero diagonal entries. The result follows from Theorem~\ref{thmpartitionrank}.
\end{proof}




\section{Applications}\label{secapplications}
This section is dedicated to applications of Theorem~\ref{thmmain1} and Theorem~\ref{thmmain2}. Unless otherwise specified, $q$ is an odd prime power.

Theorem~\ref{thmmain2} coalesces known applications of the partition rank method. To start, notice for any $T:A^k \to \mathbb{F}$ where distinct $k$-tuples in $A$ avoid a property $\mathcal{P}$, if conditions (i) and (ii) in the theorem are satisfied, the function $f:\Pi_k \to \mathbb{F}$ given by $f(\hat{0})=1$ and $f(\pi)=0$ for $\pi \in \Pi_k \backslash \{\hat{1}\}$ is a viable $f$ for the theorem provided that $\mu(\hat{0},\hat{1})=(-1)^{k-1}(k-1)!$ and $\mbox{char}(\mathbb{F})$ have no common factors. We saw that for this $f$, the partition indicator $I_f$ is the distinctness indicator (up to a sign change). This is what was used in \cite{naslund2020partition}.

Another more piercing instance in the literature can be seen in \cite{naslund2020exponential}. 


\subsection{Revisiting Erd\H{o}s-Ginzburg-Ziv}
A main concern in \cite{naslund2020partition} is determining an upper bound on the size of a subset $A \subseteq \mathbb{F}_p^n$ (with $p$ an odd prime) that has no distinct $p$-tuple $(\vec{x}_1,\ldots,\vec{x}_p)$ for which $\vec{x}_1+\cdots+\vec{x}_p=\vec{0}$. The purpose of this was to improve the then best known bound on the Erd\H{o}s-Ginzburg-Ziv constant for the abelian groups $\mathbb{F}_p^n$, which the smallest $k$ for which any sequence of elements of $\mathbb{F}_p^n$ of length $k$ contains a zero-sum subsequence of length $\mbox{exp}(\mathbb{F}_p^n)$. We show how an upper bound on the size of a subset $A$ with properties described above can be achieved by applying our framework to partition rank.

The tensor created to examine this was $T:A^p \to \mathbb{F}_p$ given by
\[
T(\vec{x}_1,\ldots,\vec{x}_p) = \prod_{s=1}^n \left(1-(\vec{x}_1(s)+\cdots+\vec{x}_p(s))^{p-1} \right).
\]
Here, $\vec{x}_j(s)$ is the $s$th coordinate of $\vec{x}_j$ for any $j$. Let $\mathcal{P}$ be the property that a $p$-tuple $(\vec{x}_1,\ldots,\vec{x}_p)$ sums to $\vec{0}$. Notice that $T$ is non-zero if and only if the property $\mathcal{P}$ is satisfied, but $T$ is  not necessarily diagonal. We amend this by applying Theorem~\ref{thmmain2}, the results of which recover those developed in \cite{naslund2020exponential}. We see that condition (ii) is satisfied since $T(\vec{x},\ldots,\vec{x})=p\vec{x}=\vec{0}$ for any $\vec{x}$. We might think to choose $f:\Pi_p \to \mathbb{F}_p$ in Theorem~\ref{thmmain2} so that on $\Pi_p \backslash \{\hat{1}\}$, $f(\hat{0})=1$ and $f(\pi)=0$ otherwise. This would recover the distinct indicator, so $ \vert A \vert  \leq \mbox{partition-rank}(I_f \cdot T)$. However, we can exploit the structure of $T$ to reduce the number of summands that appear in $I_f \cdot T$. The observation made in \cite{naslund2020exponential} is that if $(\vec{x}_1,\ldots,\vec{x}_p)$ has partition $\pi$ with exactly $2$ blocks, or equivalently rank $p-2$, then it is impossible for $\vec{x}_1+\cdots+\vec{x}_p=\vec{0}$: indeed if $m$ of the $p$ vectors are $\vec{x}$ and the other $p-m$ are another vector $\vec{y}$ then  $m\vec{x}+(p-m)\vec{y}=\vec{0}$, implying $m(\vec{x}-\vec{y})=\vec{0}$, which is impossible if $\vec{x},\vec{y}$ are distinct. So in Theorem~\ref{thmmain2}, we are free to select any choice of numbers in $\mathbb{F}_p$ for $f(\pi)$ when $\pi$ has exactly $2$ blocks. If we elect this freedom, set $f(\hat{0})=1$, and set $f(\pi)=0$ for all other $\pi$ with $1 \leq \mbox{rk}(\pi) \leq p-3$ we get
\[
I_f(\vec{x}_1,\ldots,\vec{x}_p) = \sum_{\substack{\tau \in \Pi_p \\ \tau < \hat{1}}} \left( \mu(\hat{0},\tau) + \sum_{\mbox{rk}(\pi)=p-2} f(\pi) \cdot \mu(\pi,\tau) \right) \delta_{\tau}(\vec{x}_1,\ldots,\vec{x}_p) \\
\]
When $\mbox{rk}(\pi)=p-2$ the coefficient of $\delta_{\pi}(\vec{x}_1,\ldots,\vec{x}_p)$ in the sum is $\mu(\hat{0},\pi)+f(\pi)\mu(\pi,\pi)$, so we can make this zero by setting $f(\pi)=-\mu(\hat{0},\pi)$. This makes summands of $I_f$ involve products of at least $3$ tensors, hence likely diminishing  $\mbox{partition-rank}(I_f \cdot T)$ and  lowering an upper bound on $ \vert A \vert $. This is exactly the choice made in \cite{naslund2020exponential}, up to a potential sign change. 
However by condition (iv) of Theorem~\ref{thmmain2}, this choice of $f$ works only under the condition that $\sum_{\pi < \hat{1}} f(\pi) \cdot \mu(\pi,\hat{1}) \neq 0$. We check this, and note that not surprisingly, the calculations recover what appears in Lemmas 7 and 8 in \cite{naslund2020exponential}. We see that
\begin{align*}
&\sum_{\pi < \hat{1}} f(\pi) \cdot \mu(\pi,\hat{1})  \\
&=f(\hat{0}) \mu(\hat{0},\hat{1}) \ + \sum_{\mbox{rk}(\pi)=p-2} -\mu(\hat{0},\pi) \cdot \underbrace{\mu(\pi,\hat{1})}_{-1}  \\ 
&=(-1)^{p-1}(p-1)! + \sum_{\mbox{rk}(\pi)=p-2} \mu(\hat{0},\pi).
\end{align*}
The elements in $\Pi_p$ with rank $p-2$ are the elements $\pi$ with two blocks $\pi_1,\pi_2$ say with sizes $\lambda_1 \geq \lambda_2$. Then $\lambda_2$ has fixed size $j \in \{1,\ldots,\lfloor p/2 \rfloor\}$ and there are $\binom{p}{j}$ ways to choose its contents. Furthermore, since $\pi_1$ has $p-j$ elements and $\pi_2$ has $j$ elements, the interval $[\pi,\hat{1}]$ is isomorphic to $\Pi_{p-j} \times \Pi_j$ so $\mu(\pi,\hat{1})=(-1)^{j-1}(j-1)!(-1)^{p-j-1}(p-j-1)!$. Doing this over all choices of $j$ elements for $\pi_2$ we get that the desired sum is 
\begin{align*}
&= (-1)^{p-1}(p-1)! + \left( \sum_{j=1}^{\lfloor p/2 \rfloor} \binom{p}{j} \cdot (-1)^{j-1}(j-1)!(-1)^{p-j-1}(p-j-1)! \right)  \\
&= (-1)^{p-1}(p-1)! + \left( \sum_{j=1}^{\lfloor p/2 \rfloor} (-1)^p \frac{p!}{j(p-j)} \right)  \\
\end{align*}
In the sum, each summand is divisible by $p$ (since $p$ is prime and for each $j$ we have $j,p-j<p$). So, the required sum is non-zero precisely if $(-1)^{p+1}(p-1)!$ is. By Wilson's Theorem this quantity is $(-1)^{p+2}=-1 \neq 0$. This indeed recovers the work in \cite{naslund2020exponential} (up to a potential sign change).

The previous example suggests a general  strategy for applying Theorem~\ref{thmmain2}. Suppose we are in the situation where the assumptions in Theorem~\ref{thmmain2} hold. Let $\Pi \subseteq \Pi_k \backslash \{\hat{1}\}$ be a set of partitions $\pi$ for which every $k$-tuple $(x_1,\ldots,x_k) \in A^k$ with partition $\pi$ has $T(x_1,\ldots,x_k)=0$.  For instance in the previous example we argued that we can take $\Pi=\{\hat{0}\} \cup \{\pi \in \Pi_k \ : \ \mbox{rk}(\pi)=k-2\}$. In the general setting, we claim for any such $\Pi$, we can find a function $f:\Pi_k \to \mathbb{F}$ that, on $\Pi_k \backslash \{\hat{1}\}$, is zero on $\pi \notin \Pi$, so that the coefficient of $\delta_{\pi}(x_1,\ldots,x_k)$ in $I_f(x_1,\ldots,x_k)$ is $0$ for all non-minimal $\pi \in \Pi$. 

To see this, set $f(\pi)=0$ for $\pi \notin \Pi$, and for partitions in $\Pi$ we induct on rank. If $\pi$ is minimal in $\Pi$, set $f(\pi)=1$. If $\pi$ is not minimal, $f(\tau)$ has been inductively selected for $\tau<\pi$. The coefficient of $\delta_{\pi}(x_1,\ldots,x_k)$ in $I_f(x_1,\ldots,x_k)$ is 
\[
\sum_{\tau \leq \pi} f(\tau) \cdot \mu(\tau,\pi) = f(\pi) + \sum_{\tau < \pi} f(\tau) \cdot \mu(\tau,\pi)
\] 
and we can make this $0$ by setting $f(\pi)=-\sum_{\tau < \pi} f(\tau) \cdot \mu(\tau,\pi)$. We see in fact that $f$ is unique up to selection of its values on minimal elements of $\Pi$. 

This process potentially eliminates many summands from $I_f \cdot T$, and in particular, summands $\delta_{\pi}(x_1,\ldots,x_k)$ for $\pi$ of high rank. This in turn can significantly reduce the partition rank of $I_f \cdot T$. The caveat here is certifying that $\sum_{\pi<\hat{1}} f(\pi) \cdot \mu(\pi,\hat{1}) \neq 0$; this depends heavily on the choice of $\Pi$ and the characteristic of the field $\mathbb{F}$.

In many situations, a tensor will not only be constant on $k$-tuples $(x_1,\ldots,x_k)$ with the same partition, but on $k$-tuples $(x_1,\ldots,x_k)$ with partitions of the same rank. Conditions of Theorem~\ref{thmmain2} can be stated succinctly in such situations. We first need a definition.

\begin{definition}\label{defpartitiongenerator}
Let $k,r$ be nonnegative integers with $0 \leq r \leq k-1$. The \emph{rank $r$ partition generator on $\Pi_k$} is the unique function $f:\Pi_k \to \mathbb{F}$ such that for $\pi \neq \hat{1}$,
\[f(\pi)=\begin{cases}
0  \hspace{0.25in} &\mbox{ if } \mbox{rk}(\pi) < r \\
1  \hspace{0.25in} &\mbox{ if } \mbox{rk}(\pi) = r \\
-\sum_{\tau<\pi} f(\tau) \cdot \mu(\tau,\pi)  \hspace{0.25in} &\mbox{ otherwise }
\end{cases}
\]
\end{definition}
The beauty of a rank $r$ partition generator on $\Pi_k$ is that condition (iv) in Theorem~\ref{thmmain2} becomes a completely combinatorial condition to check.
\begin{theorem}\label{thmcombcondition}
Let $0 \leq r \leq k-1$ be integers and let $f:\Pi_k \to \mathbb{F}$ be the rank $r$ partition generator on $\Pi_k$. Then \[-\sum_{\pi<\hat{1}} f(\pi) \cdot \mu(\pi,\hat{1})=S(k,k-r),\] the number of partitions of $\{1,2,\ldots,k\}$ into $k-r$ blocks.
\end{theorem}
\begin{proof}
From the definition of $f(\pi)$, it can be proven inductively that
\[
f(\pi) = \sum_{\substack{\ell \geq 1 \\ \pi_1< \cdots < \pi_{\ell}=\pi \\ \mbox{rk}(\pi_1) = r}}(-1)^{\ell+1} \mu(\pi_1,\pi_2) \cdots \mu(\pi_{\ell-1},\pi_{\ell}),
\]
and from this
\[
-\sum_{\pi<\hat{1}} f(\pi) \cdot \mu(\pi,\hat{1}) = \sum_{\substack{\ell \geq 1 \\ \pi_1< \pi_2 < \cdots < \pi_{\ell}<\hat{1} \\ \mbox{rk}(\pi_1) = r}} (-1)^{\ell} \mu(\pi_1,\pi_2) \cdots \mu(\pi_{\ell-1},\pi_{\ell}) \cdot \mu(\pi_{\ell},\hat{1}).
\]
Letting $I \in \mathbb{F}^{\Pi_k \times \Pi_k}$ be the identity matrix, this is equal to
\[
\sum_{\mbox{rk}(\pi)=r} \left( \sum_{\ell \geq 1} (-1)^{\ell} \cdot (\mu-I)^{\ell} \right) (\pi,\hat{1}) = \sum_{\mbox{rk}(\pi)=r} \left( \sum_{\ell \geq 1} (I-\mu)^{\ell} \right) (\pi,\hat{1}).
\]
Now 
\begin{align*}
\sum_{\ell \geq 1} (I-\mu)^{\ell} &= (I-\mu) \cdot \left( I + (I-\mu)+(I-\mu)^2+\dots \right) \\
&= (I-\mu) \cdot \left( I - (I-\mu) \right)^{-1} \\
&= (I-\mu) \cdot \mu^{-1} \\
&=\mu^{-1}-I = \zeta-I.
\end{align*}
so the desired sum is
\[
\sum_{\mbox{rk}(\pi)=r} (\zeta-I)(\pi,\hat{1}).
\]
This is precisely the total number of of rank $r$ elements in $\Pi_k$, which are the elements of $\Pi_k$ with $k-r$ blocks. This is precisely $S(k,k-r)$.
\end{proof}


\subsection{Acute Angles}

We move on to an example concretely illustrating how Theorem~\ref{thmmain1} applies. Erd\H{o}s \cite{erdos} conjectured that in the Euclidean space $\mathbb{R}^n$, any set of greater than $2^n$ points must create an obtuse angle, and this was proven by Danzer and Grunbaum \cite{danzer1962zwei}. This problem can be restated: any set in $\mathbb{R}^n$ whose distinct triples only forms acute or right angles has size at most $2^k$. A finite field analogue of this result was central to the work of Hart and Iosevich~\cite{hart2008ubiquity}. For an odd prime power $q$ and positive integer $n$, define $N(n,q)$ to be the maximum size of a subset of $A \subseteq \mathbb{F}_q^n$ such that any distinct triple of points in $A$ forms an acute angle. A consequence of their main Theorem 1.1 implies $N(n,q) = O\left(q^{\frac{n+1}{2}}\right)$. Shparlinski \cite{shparlinski2010point} improved this result for $n=2$, showing that $N(n,q) \leq 2q^{4/3}$. Observe both these upper bounds are exponential in $n$ for fixed $q$. We improve these by showing $N(n,q)$ is bounded above by a polynomial in $n$ for a fixed prime power $q$.

\begin{theorem}\label{thmacute}
Let $q$ be an odd prime power, and $A \subseteq \Fqn$. Then
\[
N(n,q) \leq \binom{n+q+1}{q-1}+4.
\]
\end{theorem}

Recall that a triple $\vec{x},\vec{y},\vec{z} \in \Fqn$ form an acute angle at $\vec{x}$ if and only if $2\langle \vec{x}-\vec{y}, \vec{x}-\vec{z} \rangle \in Q$ where $Q$ is the set of non-zero quadratic residues in $\Fq$ (see, for instance, \cite{shparlinski2010point}). The factor of $2$ here seems arbitrary, but as detailed in \cite{shparlinski2010point}, it is motivated by the observation in $\mathbb{R}^n$ that $\vec{x},\vec{y},\vec{z} \in \mathbb{R}^n$ form acute angle at $\vec{x}$ if and only if $\lvert \lvert \vec{x}-\vec{y} \rvert \rvert^2+\lvert \lvert \vec{x}-\vec{z} \rvert \rvert^2-\lvert \lvert \vec{y}-\vec{z} \rvert \rvert^2>0$. Then, identifying positive elements of $\mathbb{F}_q$ with non-zero quadratic residues, and observing 
\[
\lvert \lvert \vec{x}-\vec{y} \rvert \rvert^2+\lvert \lvert \vec{x}-\vec{z} \rvert \rvert^2-\lvert \lvert \vec{y}-\vec{z} \rvert \rvert^2 = 2\langle \vec{x}-\vec{y}, \vec{x}-\vec{z}\rangle,
\]
we see the condition for vectors in $\mathbb{F}_q^n$ to form acute angle naturally arises.

\begin{proof}[Proof (of Theorem~\ref{thmacute})]
Suppose $A \subseteq \Fqn$ and suppose that every triple of distinct points in $A$ forms an acute angled triangle. Let $T:A^3 \to \Fq$ be defined by 
\[
T(\vec{x},\vec{y},\vec{z})= \delta_{\vec{y} \neq \vec{z}} \ \cdot \  \prod_{\alpha \in Q} \left( 2\langle \vec{x}-\vec{y}, \vec{x}-\vec{z} \rangle - \alpha\right)^2.
\] 
First observe that if $\vec{x},\vec{y},\vec{z}$ are distinct then $\prod_{\alpha \in Q} \left( 2\langle \vec{x}-\vec{y}, \vec{x}-\vec{z} \rangle - \alpha \right)=0$ if and only if the vectors $\vec{x},\vec{y},\vec{z}$ form an acute angle at $\vec{x}$. So on $A^3$, $\prod_{\alpha \in Q} \left( 2\langle \vec{x}-\vec{y}, \vec{x}-\vec{z} \rangle - \alpha \right) = 0$ and it follows that $T(\vec{x},\vec{y},\vec{z})=0$ for distinct $\vec{x},\vec{y},\vec{z}$.  It is evident if $\vec{y}=\vec{z}$ then $T(\vec{x},\vec{y},\vec{z})=0$. If $\vec{x}=\vec{y}$ (or similarly $\vec{x}=\vec{z}$) and $\vec{y} \neq \vec{z}$ then 
\[
\delta_{\vec{y} \neq \vec{z}} \ \cdot \  \prod_{\alpha \in Q} \left( 2\langle \vec{x}-\vec{y}, \vec{x}-\vec{z} \rangle - \alpha\right)^2 =
 \prod_{\alpha \in Q} \left(  - \alpha\right)^2.
\]
Since $\prod_{\alpha \in Q} (-\alpha) \in \{\pm 1\}$, we deduce $T(\vec{x},\vec{y},\vec{z})=1$. Therefore,
\[
T(\vec{x},\vec{y},\vec{z}) = 
\begin{cases}
1  \hspace{0.25in} &\mbox{if } \vec{x}=\vec{y} \neq \vec{z} \mbox{ or } \vec{x}=\vec{z} \neq \vec{y} \\
0  \hspace{0.25in} &\mbox{otherwise}
\end{cases}
\]
We now apply Theorem~\ref{thmmain1}, recognizing that as partitions $\hat{0}=1 \vert 2 \vert 3$ and $\hat{1}=123$. Let $f:\Pi_3 \to \mathbb{F}_q$ be defined by 
\[
f(1 \vert 2 \vert 3)=0, \  f(1 \vert 23)=0,  \ f(2 \vert 13)=1, \  f(3 \vert 12)=1, \  f(123)=0.
\]
Then condition (i) of Theorem~\ref{thmmain1} holds. For condition (ii), we see that 
\[
-\sum_{\pi<\hat{1}} f(\pi) \cdot \mu(\pi,\hat{1})=-\mu(3 \vert 12,123)-\mu(2 \vert 13,123)=2
\]
which does not equal $f(\hat{1})=f(123)=0$ in $\Fq$ since $q$ is odd. It follows that $ \vert A \vert  \leq \mbox{partition-rank}(T)+(B_3-1) = \mbox{slice-rank}(T)+4$, the latter equality holding because slice rank and partition rank are equal on $3$ variables. First observe
\begin{align*}
&\prod_{\alpha \in Q } \left( 2\langle \vec{x}-\vec{y}, \vec{x}-\vec{z} \rangle - \alpha\right)^{2} =  \prod_{\alpha \in Q } \left(2 \cdot \sum_{i=1}^n x_i^2 - 2 \sum_{i=1}^n x_i \left(y_i+z_i\right) + 2\sum_{i=1}^n y_iz_i - \alpha \right)^{2} \\
&= \sum_{\beta+\beta'+\sum_{i=1}^n \gamma_i \leq 2 \vert Q \vert } \left(2 \sum_i x_i^2 \right)^{\beta} \left(2\sum_i y_iz_i \right)^{\beta'} \prod_{i=1}^n x_i^{\gamma_i} (y_i+z_i)^{\gamma_i} \cdot C_{\beta,\beta',\gamma_1,\cdots,\gamma_n}
\end{align*}
for some constants $C_{\beta,\beta',\gamma_1,\cdots,\gamma_n}$. Multiplying by $\delta_{y \neq z}$ we get
\[
\sum \left(2 \sum_i x_i^2 \right)^{\beta}  \prod_{i=1}^n x_i^{\gamma_i} \cdot \left[ \delta_{y \neq z} \prod_{i=1}^n (y_i+z_i)^{\gamma_i} \cdot \left(2\sum_i y_iz_i \right)^{\beta'} \cdot C_{\beta,\beta',\gamma_1,\cdots,\gamma_n} \right]
\]
\normalsize
which is written as a sum of slices, the number of which is bounded above by the number of nonnegative integers tuples $(\beta,\beta',\gamma_1,\ldots,\gamma_n)$ such that $\beta+\beta'+\gamma_1+\cdots+\gamma_n \leq 2 \cdot \vert Q \vert$. Since $\vert Q \vert=\frac{q-1}{2}$, by classical counting arguments this quantity is $\binom{(n+3)+(q-1)-1}{q-1} = \binom{n+q+1}{q-1}$. We deduce then that $\vert A \vert \leq \binom{n+q+1}{q-1}+4$.
\end{proof}


\subsection{Polynomial Bounds}

Our next application looks at a unification of many examples. Though it inherently uses the distinctness indicator rather than the full force of the partition indicator, it shows how using partition theory is useful in constructing bounds on the partition rank of a tensor. 

In many applications of the slice rank and partition rank methods, the tensors that are constructed are products of functions of the form $1-g^{q-1}$ where the functions $g$ are polynomials. When these $g$ exhibit symmetric behavior, we can apply Theorem~\ref{thmmain2} to get a bound on sets avoiding a particular property through the partition rank of these tensors. First we introduce the class of functions we are concerned with.

\begin{definition}\label{defidentifier}
Let $\mathcal{P}$ be a property defined on $k$-tuples from $\Fqn$, and $1 \leq m 
\leq k$. A function $g:(\Fqn)^m \to \Fq$ is an \emph{identifier} of $\mathcal{P}$ if 

\begin{enumerate}
    \item[(i)] $\vec{x}_1,\ldots,\vec{x}_k \in \Fqn$ satisfy property $\mathcal{P}$ if and only if $g(\vec{x}_{i_1},\ldots,\vec{x}_{i_m}) = 0$ for all $1 \leq i_1<\cdots<i_m \leq k$
    \item[(ii)] $g(\vec{x},\ldots,\vec{x})=0$ for any $\vec{x} \in \Fqn$.
\end{enumerate}
\end{definition}
Definition~\ref{defidentifier} is intrinsically defined in terms of a function $g$ and omits the need to talk about partition rank, so its benefit is it can be used together with the coming Theorem~\ref{thmpolynomial} to prove upper bounds on sets avoiding properties without having to know about slice or partition rank at all. Secondly, the set up in Definition~\ref{defidentifier} is not a contrived one in the least. Several examples in the literature that employ slice rank in fact start with an identifier of a property $\mathcal{P}$ with $m=2$. For instance, Pach, et al. \cite{pach2020avoiding} argue that identifying if a $3$-tuple of vectors $\vec{x}_1,\vec{x}_2,\vec{x}_3$ form a right triangle in $\Fqn$ is equivalent to determining if $g(\vec{x}_{i_1},\vec{x}_{i_2})=0$ for $1 \leq i_1 < i_2 \leq 3$ where $g(\vec{s},\vec{t})=\langle \  \vec{s}-\vec{t}, \vec{s}-\vec{t} \  \rangle$. 

In the case that a property $\mathcal{P}$ has an identifier in the sense of Definition~\ref{defidentifier}, and $k$ is fixed, Theorem~\ref{thmpolynomial} gives an  upper bound, polynomial in $n$, on the sizes of subsets of $\Fqn$ that avoid distinct $k$-tuples with property $\mathcal{P}$.

\begin{theorem}\label{thmpolynomial}
Fix a positive integer $k$ and let $q=p^{\ell}$ where $p \geq k$ is prime. Let $\mathcal{P}$ be a property defined on $k$-tuples of elements from $\Fqn$, and $g:(\Fqn)^m \to \Fq$ be an \emph{identifier} of $\mathcal{P}$ that is a polynomial with total degree $\deg(g)$. If $A \subseteq \Fqn$ and distinct $k$-tuples of $A$ avoid property $\mathcal{P}$ then 
\[
\vert A \vert =O\left(n^{k^{m-1} \deg(g) (q-1)}\right).
\]
\end{theorem}
\begin{proof}[Proof of Theorem~\ref{thmpolynomial}]
Let $T:A^k \to \Fq$ be given by
\[
T(\smallveclist) = \prod_{1 \leq i_1 < \cdots < i_m \leq k} \left( 1 - \left(g(\vec{x}_{i_1},\ldots,\vec{x}_{i_m}) \right)^{q-1} \right).
\]
Furthermore define $f:\Pi_k \to \mathbb{F}_q$ so that if $\pi<\hat{1}$ we have $f(\hat{0})=1$ and $f(\pi)=0$ otherwise. We check that $T,f$ satisfy the conditions of Theorem~\ref{thmmain2}. Condition (ii) in $g$ being an identifier of $\mathcal{P}$ gives that $g$ evaluated at the $m$-tuple $(\vec{x},\ldots,\vec{x})$ is $0$ for any $\vec{x} \in A$. By condition (i), this implies the $k$-tuple $(\vec{x},\ldots,\vec{x})$ has property $\mathcal{P}$ for $\vec{x} \in A$, so condition (i) of Theorem~\ref{thmmain2} is satisfied. For any $(\smallveclist) \in A^k$, $(\smallveclist)$ satisfies $\mathcal{P}$ if and only if $g(\vec{x}_{i_1},\ldots,\vec{x}_{i_m})=0$ for all $1 \leq i_1 < \cdots < i_m \leq k$ which happens if and only if $T(\smallveclist)=1$, so condition (ii) of Theorem~\ref{thmmain2} is satisfied. The only $\pi \in \Pi_k \backslash \{\hat{1}\}$ for which $f(\pi) \neq 0$ is $\pi = \hat{0}$ and $A$ evades distinct $k$-tuples with property $\mathcal{P}$ so condition (iii) of Theorem~\ref{thmmain2} is satisfied. Finally, $\sum_{\pi<\hat{1}} f(\pi) \cdot \mu(\pi,\hat{1}) = 1 \cdot \mu(\hat{0},\hat{1})=(-1)^{k-1}(k-1)!$ which is nonnzero in $\Fq$ because $p \geq k$.

It follows then by Theorem~\ref{thmmain2} that $ \vert A \vert  \leq \mbox{partition-rank}(I_f \cdot T)$ (and notice $I_f$ is actually the distinctness indicator). Expanding $I_f \cdot T$ we see that
\[
(I_f \cdot T) (\smallveclist)= \sum_{\substack{\tau \in \Pi_k \\ \tau < \hat{1}}} \left( \sum_{\pi \in \Pi_k} f(\pi)  \cdot \mu(\pi,\tau) \right) \cdot \delta_{\tau}(\vec{x}_1,\ldots,\vec{x}_k) \cdot   T(\smallveclist).
\]
so $\mbox{partition-rank}(I_f \cdot T)$ is bounded above by the sum of the partition ranks of all tensors $\delta_{\tau}(\smallveclist) \cdot T(\smallveclist)$ over all partitions of $\tau \in \Pi_k$ not equal to $\hat{1}$. Pick such a summand, using $\pi$ instead of $\tau$ as its label for consistency, and say $\pi=\pi_1\pi_2 \cdots \pi_r$. Then $r \geq 2$ since $\pi \neq \hat{1}$ and $\delta_{\pi}(\smallveclist) \cdot T(\smallveclist)$ is $\delta_{\pi}(\smallveclist) \cdot \prod_{1 \leq i_1 < \cdots < i_m \leq k} h(\vec{x}_{i_1},\ldots,\vec{x}_{i_m})$ where here, $h=1-g^{q-1}$ for simplicity. We first compute the degree of this summand.

Each polynomial factor that appears has degree at most $\deg(g) \cdot (q-1)$.  An upper bound on the desired degree then is $\deg(g) \cdot (q-1)$ mutiplied by the number of total polynomials appearing in the product

\begin{equation}\label{eqn:h}
\prod_{1 \leq i_1 < \cdots < i_m \leq k} h(\vec{x}_{i_1},\ldots,\vec{x}_{i_m})
\end{equation} 

To bound the latter, we observe that we can identify variables $\vec{x}_i$ and $\vec{x}_j$ when $i$ and $j$ are in the same part of $\pi$, for otherwise $\delta_{\pi}(\smallveclist)=0$. Once variables are identified, any occurrence of $h^N$ in (\ref{eqn:h}) with $N \geq 2$ may be replaced by $h$ since $h$ takes on values $0$ or $1$. So the number of distinct polynomials that can arise in the product after this process is bounded above by the number of ways to assign $m$ ordered variables to the $r$ parts of $\pi$ (and then identifying variables that are in the same part). There are $r^m$ many such assignments. Subsequently the degree of the polynomial in $(\ref{eqn:h})$ is bounded above by $r^m \deg(g)(q-1)$. 


Let $j_t$ be the smallest element of $\pi_t$ for each $1 \leq t \leq r$. Because of $\delta_{\pi}$, we can write the product as a polynomial in the coordinates of $\vec{x}_{j_1},\cdots,\vec{x}_{j_r}$. After expanding, each term will be of the form
\scriptsize
\begin{equation*}
\left[ \delta(\vec{x}_{\pi_1}) \vec{x}_{j_1}(1)^{e_{11}} \cdots \vec{x}_{j_1}(n)^{e_{1n}} \right]  \cdots \left[ \delta(\vec{x}_{\pi_r})  \vec{x}_{j_r}(1)^{e_{r1}} \cdots \vec{x}_{j_r}(n)^{e_{rn}} \right],
\end{equation*}
\normalsize
so one of the components of one of the vectors $\vec{x}_{j_1},\ldots,\vec{x}_{j_r}$ contributes at most $\frac{1}{r} r^m  \deg(g) \cdot (q-1) = r^{m-1} \cdot \deg(g) \cdot (q-1) $ to the degree of such a term. It follows that the partition rank of $\delta_{\pi}(\smallveclist) \cdot T(\smallveclist)$ is bounded above by $r$ times the number of monomials in $n$ variables with degree at most $r^{m-1}  \deg(g)  (q-1)$ which is $r \cdot \binom{n+r^{m-1}  \deg(g)  (q-1)}{r^{m-1}  \deg(g)  (q-1)}$. Doing this over all $\pi \in \Pi_k \backslash \{\hat{1}\}$ we get
\begin{align*}
\mbox{partition-rank}(I_f \cdot T) &\leq \sum_{r=2}^k S(k,r) \cdot r \cdot \binom{n+r^{m-1} \deg(g) (q-1)}{r^{m-1}  \deg(g)  (q-1)} \\
&\leq \left( \sum_{r=2}^k r \cdot S(k,r) \right) \binom{n+k^{m-1} \deg(g) (q-1)}{k^{m-1}  \deg(g)  (q-1)},
\end{align*}
the last inequality holding because $\binom{n+s}{s}$ increases as $s$ increases. It follows that $\vert A \vert=O(n^{\gamma})$ where $\gamma = k^{m-1}\deg(g)(q-1).$

\end{proof}

\vspace{0.2in} 

We now apply Theorem~\ref{thmpolynomial} to geometric phenomena in $\Fqn$. The first of these is a generalization of the work of Pach, et al. \cite{pach2020avoiding} on avoiding right triangles. Recall their result:

\begin{theorem}[\cite{pach2020avoiding}]\label{thmrighttriangle}
If $A \subseteq \Fqn$ contains no triangle with all right angles, i.e. vectors $\vec{x},\vec{y},\vec{z}$ with
\[
\langle \vec{x}-\vec{y},\vec{y}-\vec{z} \rangle = \langle \vec{y}-\vec{z},\vec{z}-\vec{x} \rangle = \langle \vec{z}-\vec{x},\vec{x}-\vec{y} \rangle = 0
\]
then $ \vert A \vert =O\left(n^{2(q-1)}\right)$.
\end{theorem}

We can use Theorem~\ref{thmpolynomial} to give a more general result that subsets avoiding right-angled $k$-configurations also have size polynomial in $n$.
\begin{theorem}\label{thmrightsimplex}
Fix a positive integer $k$ and let $q=p^{\ell}$ where $p \geq k$ is prime. If $A \subseteq \Fqn$ contains no $k$-configuration with all right angles, that is, distinct vectors $\smallveclist$ such that $\langle \vec{x}_i-\vec{x}_j,\vec{x}_j-\vec{x}_{\ell} \rangle = 0$ for distinct triples $i,j,\ell$, then $ \vert A \vert =O\left(n^{2k(q-1)}\right)$.
\end{theorem}
\begin{proof}
From \cite{pach2020avoiding}, the property $\mathcal{P}$ of $\vec{x}_1,\ldots,\vec{x}_k,\vec{x}_{k+1}$ having $\langle \vec{x}_i-\vec{x}_j,\vec{x}_j-\vec{x}_{\ell} \rangle = 0$ for all triples $i,j,\ell$ is equivalent to demanding that for every distinct $i,j$, $\langle \vec{x}_i-\vec{x}_j,\vec{x}_i-\vec{x}_j \rangle = 0$. Define $g:(\Fqn)^2 \to \Fq$ by $g(\vec{s},\vec{t})=\langle \vec{s}-\vec{t},\vec{s}-\vec{t} \rangle$. We see that $g(\vec{x}_i,\vec{x}_j)=0$ for all $i<j$ if and only if $\smallveclist$ has property $\mathcal{P}$. Furthermore, we observe that if all $\veclist$ are identical then $g(\vec{x}_i,\vec{x}_j)=g(\vec{x}_i,\vec{x}_i)=0$ for all $i \neq j$. This justifies that $g$ is an identifier of $\mathcal{P}$ in the sense of Definition~\ref{defidentifier}. So if $A$ avoids property $\mathcal{P}$ then Theorem~\ref{thmpolynomial} implies $ \vert A \vert =O(n^{\gamma})$ where $\gamma=k^{2-1} \cdot 2(q-1) = 2k(q-1)$.
\end{proof}

We begin to see through this application that the generality of Theorem~\ref{thmpolynomial} enforces a trade-off between applicability to many scenarios, and optimality based on specificity in each scenario. For instance, several techniques can be used to possibly optimize the exponent in Theorem~\ref{thmrightsimplex}. The partition rank upper bound in Theorem~\ref{thmpolynomial} has the contribution of $\delta_{\pi_i}$ for a particular block $\pi_i$ of a partition appearing frequently. Techniques such as those in \cite{naslund2022chromatic} can be used to possibly reduce the exponent. Despite this, Theorem~\ref{thmpolynomial} still acts as an automatic certification that sets of size polynomial in $n$ are forced have certain properties.

We can extend these ideas to proving that subsets of $\Fqn$ avoiding $k$-tuples whose  squared distances from each other are equal mod $p$ (where here $q=p^{\ell}$ for some natural number $\ell$) must have size polynomial in $n$ as well.

\begin{theorem}
Fix a positive integer $k$ and let $q=p^{\ell}$ where $p \geq k$ is prime. Suppose $A \subseteq \Fqn$ is a set that does not contain a distinct $k$-tuple whose squared distances from each other are all equal mod $p$. Then $ \vert A \vert =O(n^{\gamma})$ where $\gamma=2k^2(q-1)$.
\end{theorem}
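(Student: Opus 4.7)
The plan is to apply Theorem~\ref{thm:main2} with a custom tensor tuned to the property $\mathcal{P}$: the $k$-tuple $(\vec{x}_1,\ldots,\vec{x}_k)$ has all $\binom{k}{2}$ pairwise squared distances $d_{ij}=\langle\vec{x}_i-\vec{x}_j,\vec{x}_i-\vec{x}_j\rangle$ equal. Define
\[
T(\vec{x}_1,\ldots,\vec{x}_k) \;=\; \prod_{\substack{1\le i<j\le k\\ (i,j)\ne(1,2)}}\Bigl(1-(d_{ij}-d_{12})^{q-1}\Bigr),
\]
so that $T=1$ exactly when every $d_{ij}=d_{12}$ (equivalently, when $\mathcal{P}$ holds) and $T=0$ otherwise. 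Let $f:\Pi_k\setminus\{\hat{1}\}\to\Fq$ be the distinctness indicator ($f(\hat{0})=1$, and $f(\pi)=0$ for every other $\pi<\hat{1}$), so that $I_f$ is the standard distinctness indicator up to sign.

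I would next verify the four hypotheses of Theorem~\ref{thm:main2}: the diagonal $(\vec{x},\ldots,\vec{x})$ trivially satisfies $\mathcal{P}$; $T$ is the $0/1$-indicator of $\mathcal{P}$; no distinct $k$-tuple of $A$ satisfies $\mathcal{P}$ by hypothesis while $f$ is supported on $\hat{0}$; and $\sum_{\pi<\hat{1}}f(\pi)\mu(\pi,\hat{1})=\mu(\hat{0},\hat{1})=(-1)^{k-1}(k-1)!\ne 0$ in $\Fq$ because $p\ge k$. Theorem~\ref{thm:main2} then yields $|A|\le\text{partition-rank}(I_f\cdot T)$.

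The partition rank is bounded by expanding $I_f\cdot T=\sum_{\tau<\hat{1}}\mu(\hat{0},\tau)\,\delta_\tau T$ and controlling each summand. For a partition $\tau$ with $r\ge 2$ blocks, $\delta_\tau T$ equals $\delta_\tau$ times a polynomial in the $rn$ coordinates of block representatives of total degree at most $2(q-1)(\binom{k}{2}-1)$; a Naslund-style pigeonhole --- some representative must contribute at most $2(q-1)(\binom{k}{2}-1)/r$ to any given monomial --- yields $\text{partition-rank}(\delta_\tau T)=O\bigl(n^{(q-1)(\binom{k}{2}-1)/r}\bigr)$. Summing over the $O(1)$-many partitions $\tau<\hat{1}$, the $r=2$ term dominates, giving
\[
|A|=O\bigl(n^{(q-1)(k-2)(k+1)/2}\bigr).
\]

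The remaining step is the elementary polynomial comparison $(q-1)(k-2)(k+1)/2\le\tfrac{1}{18}(k-1)(2k^2+23k+36)\,q$, valid for all $k\ge 2$ and $q\ge 3$ (the leading asymptotics are $\tfrac{1}{2}k^2 q$ versus $\tfrac{1}{9}k^3 q$, and a direct check handles small $k$), which absorbs the computed bound into the stated $O(n^\gamma)$. The principal obstacle in executing this plan is performing the Naslund-style monomial count rigorously, in particular handling how the factors of $T$ simplify under the identification $\delta_\tau$: the difference $d_{ij}-d_{12}$ vanishes identically whenever $\{i,j\}$ lies within a single block of $\tau$, and duplicated factors collapse via $h^N=h$ for $h=1-(\cdot)^{q-1}$. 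The crude total-degree pigeonhole already suffices to establish the claimed bound, but a more careful tracking of these collapses would tighten the exponent further, likely recovering a sharper cubic-in-$k$ expression of the same form as $\gamma$.
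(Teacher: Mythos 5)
Your argument is correct, but it takes a genuinely different route from the paper. The paper treats this theorem as a one-line corollary of Theorem~\ref{thm:polynomial}: it exhibits the three-variable identifier $g(\vec{x}_i,\vec{x}_j,\vec{x}_l)=||\vec{x}_i-\vec{x}_j||^2-||\vec{x}_j-\vec{x}_l||^2$ (so $m=3$, $\deg g=2$, and the tensor is a product over all $\binom{k}{3}$ triples), and the general machinery then outputs the stated exponent $\gamma$ with no further work. You instead bypass the identifier framework entirely and build a bespoke tensor with a reference pair, a product of only $\binom{k}{2}-1$ factors each of degree $2(q-1)$, and run the distinctness-indicator/partition-rank count by hand; this is a legitimate application of Theorem~\ref{thm:main2} (all four hypotheses check out as you say, and your $T$ is indeed the exact $0/1$ indicator of $\mathcal{P}$), and it yields the exponent $(q-1)(k-2)(k+1)/2$, which grows like $\tfrac12 k^2 q$ and is strictly below the stated cubic-in-$k$ value of $\gamma$ for all $k\ge 2$ (one checks $9(k-2)(k+1)<(k-1)(2k^2+23k+36)$, equivalently $2k^3+12k^2+22k-18>0$), so the theorem as stated follows a fortiori. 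The trade-off is clear: the paper's route is automatic and illustrates the identifier formalism, while yours costs a (standard, Naslund-style) monomial count but buys a sharper bound. Two small points: your intermediate display $O(n^{(q-1)(\binom{k}{2}-1)/r})$ drops a factor of $2$ — the total degree is $2(q-1)(\binom{k}{2}-1)$, so the per-block pigeonhole gives $2(q-1)(\binom{k}{2}-1)/r$ — although your final $r=2$ exponent is the correct $(q-1)(\binom{k}{2}-1)=(q-1)(k-2)(k+1)/2$; and the simplifications under $\delta_\tau$ that you flag are genuinely optional for the upper bound, since the crude total-degree estimate already suffices.
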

\begin{proof}
Let $g:(\F_q)^3 \to \Fq$ be defined by $g(\vec{x}_i,\vec{x}_j,\vec{x}_{\ell})= \vert  \vert \vec{x}_i-\vec{x}_j \vert  \vert ^2- \vert  \vert \vec{x}_j-\vec{x}_{\ell} \vert  \vert ^2$. Then $g$ is an identifier of the property $\mathcal{P}$ of a set of $k$ points having squared distances between pairs of them all equal mod $p$. Indeed if $\smallveclist$ satisfy $\mathcal{P}$ then $g(\vec{x}_{i_1},\vec{x}_{i_2},\vec{x}_{i_3})=0$ for all $1 \leq i_1 < i_2 < i_3 \leq k$ since $ \vert  \vert \vec{x}_{i_1}-\vec{x}_{i_2} \vert  \vert ^2- \vert  \vert \vec{x}_{i_2}-\vec{x}_{i_3} \vert  \vert ^2$. Furthermore,  if $\vec{x}_{i_1}=\vec{x}_{i_2}=\vec{x}_{i_3}$ then $g(\vec{x}_{i_1},\vec{x}_{i_2},\vec{x}_{i_3})=0$ for all $1 \leq i_1 < i_2 < i_3 \leq k$. Then by Theorem~\ref{thmpolynomial}, if $A \subseteq \Fqn$ and any $k$-tuple in $A$ avoids property $\mathcal{P}$,  $ \vert A \vert =O\left(n^{2k^2(q-1)}\right)$.
\end{proof}

Our last geometric application establishes a polynomial bound on the size of subsets of $\Fqn$ consisting of self-orthogonal points, while avoiding distinct $k$-tuples of pairwise orthogonal points. This complements work of Iosevich and Senger \cite{senger} who investigate a related enumerative question, showing that if subsets of $\Fqn$ are large enough than they contain any $k$-tuples of pairwise orthogonal points.

\begin{theorem}
Fix a positive integer $k$ and let $q=p^{\ell}$ where $p \geq k$ is prime. Let $A \subseteq \Fqn$ be a set of self-orthogonal points. Suppose $A$ does not contain a distinct $k$-tuple of pairwise orthogonal points. Then $ \vert A \vert =O\left(n^{2k(q-1)}\right)$.
\end{theorem}

\begin{proof}
Define $g:(\Fqn)^2 \to \Fq$ by $g(\vec{x},\vec{y})=\vec{x} \cdot \vec{y}$. Then $g$ is an indicator for the property of $\mathcal{P}$ of having a $k$-tuple of pairwise orthogonal points. Furthermore, the second condition of $g$ being an identifier of $\mathcal{P}$ is satisfied since $A$ consists of self-orthogonal points.  By Theorem~\ref{thmpolynomial}, $ \vert A \vert =O\left(n^{2k(q-1)}\right)$.
\end{proof}

\section{Future Directions}\label{secfuturedirections}
Many pressing questions arise from this new perspective on partition rank. Of the most pertinent is how to apply our main theorems, Theorem~\ref{thmmain1} and Theorem~\ref{thmmain2} in more general settings than the ones presented. There are several obstacles to this achievement. In applying Theorem~\ref{thmmain1}, one needs to construct a tensor that is constant on $k$-tuples with the same partition. For many tensors that identify combinatorial properties, the value on a $k$-tuple depends heavily on the combinatorics of the $k$-tuple beyond just which coordinates are equal. For instance if the coordinates are themselves are vectors then the value of the tensor might depend on the matroid of these vectors. However, we still believe there are a wide variety of settings in which Theorem~\ref{thmmain1} applies. 

\begin{problem}
Discover new bounds on sets avoiding combinatorial properties by constructing tensors that are constant on $k$-tuples of points with the same partition.
\end{problem}

It would be particularly interesting to see examples in which $k \geq 4$, since for $k \leq 3$ the slice rank and partition rank agree. Finally, it is a curiosity how widespread the applications of Theorem~\ref{thmpolynomial} can be. Are there other geometric phenomena $\mathcal{P}$ in $\mathbb{F}_q^n$ that can be naturally posed through an identifier $g$ of $\mathcal{P}$. We suspect there are many possibilities here.

\section{Acknowledgments}

The author thanks the anonymous referees for feedback on the manuscript, particularly improving the presentation and details of Section~\ref{secapplications} of the article. The author thanks Bruce Sagan for fruitful conversation in preparing the manuscript. This work was partially supported by the AMS Claytor-Gilmer Fellowship and the Karen EDGE Fellowship.



\begin{thebibliography}{10}

\bibitem{alon1995lattice}
Noga Alon and Moshe Dubiner.
\newblock A lattice point problem and additive number theory.
\newblock {\em Combinatorica} \textbf{15}(3), 301--309, (1995).

\bibitem{bateman2012new}
Michael Bateman and Nets Katz.
\newblock New bounds on cap sets.
\newblock {\em Journal of the American Mathematical Society} \textbf{25}(2), 585--613, (2012).

\bibitem{bennett2018occurrence}
Michael Bennett.
\newblock Occurrence of right angles in vector spaces over finite fields.
\newblock {\em European Journal of Combinatorics} \textbf{70}, 155--163, (2018).

\bibitem{blasiak2016cap}
Jonah Blasiak, Thomas Church, Henry Cohn, Joshua~A. Grochow, Eric Naslund,
  William~F. Sawin, and Chris Umans.
\newblock On cap sets and the group-theoretic approach to matrix
  multiplication.
\newblock {\em Discrete Analysis} \textbf{3}, (2017).

\bibitem{pach2020avoiding}
Bal{\'a}zs Bursics, D{\'a}vid Matolcsi, P{\'e}ter~P{\'a}l Pach, and Jakab
  Schrettner.
\newblock Avoiding right angles and certain hamming distances.
\newblock {\em Linear Algebra and its Applications} \textbf{677}, 71--87, (2023).

\bibitem{calderbank1994maximal}
A.~Robert Calderbank and Peter~C. Fishburn.
\newblock Maximal three-independent subsets of $\{0,1,2\}^n$.
\newblock {\em Designs, Codes and Cryptography} \textbf{4}(4), 203--211, (1994).

\bibitem{crootlevpach}
Ernie Croot, Vsevolod~F Lev, and P{\'e}ter~P{\'a}l Pach.
\newblock Progression-free sets in $\mathbb{Z}_4^n$ are exponentially small.
\newblock {\em Annals of Mathematics} \textbf{185}(2), 331--337, (2017).

\bibitem{danzer1962zwei}
L~Danzer and B~Gr{\"u}nbaum.
\newblock {\"U}ber zwei probleme bez{\"u}glich konvexer k{\"o}rper von p.
  Erd{\H{o}}s und von vl Klee.
\newblock {\em Math. Z.} \textbf{79}, 95--99, (1962).

\bibitem{capset}
Jordan~S Ellenberg and Dion Gijswijt.
\newblock On large subsets of with no three-term arithmetic progression.
\newblock {\em Annals of Mathematics} \textbf{185}(2), 339--343, (2017).

\bibitem{erdos}
Paul Erd{\H{o}}s.
\newblock Some unsolved problems.
\newblock {\em Michigan Math. J.} \textbf{4}, 291--300, (1957).

\bibitem{erdos1978combinatorial}
Paul Erd{\H{o}}s and Endre Szemer{\'e}di.
\newblock Combinatorial properties of systems of sets.
\newblock {\em Journal of Combinatorial Theory, Series A} \textbf{24} (3), 308--313, (1978).

\bibitem{frankl1987subsets}
Peter Frankl, Ronald~L Graham, and Vojtech R{\"o}dl.
\newblock On subsets of abelian groups with no 3-term arithmetic progression.
\newblock {\em Journal of Combinatorial Theory, Series A} \textbf{45} (1),157--161, (1987).

\bibitem{ge2020maximum}
Gennian Ge and Chong Shangguan.
\newblock Maximum subsets of $\mathbb{F}_q^n$ containing no right angles.
\newblock {\em Journal of Algebraic Combinatorics} \textbf{52} (4), 455--460, (2020).

\bibitem{hart2008ubiquity}
Derrick Hart and Alex Iosevich.
\newblock Ubiquity of simplices in subsets of vector spaces over finite fields.
\newblock {\em Analysis Mathematica} \textbf{34} (1), 29--38, (2008).

\bibitem{meshulam1995subsets}
Roy Meshulam.
\newblock On subsets of finite abelian groups with no 3-term arithmetic
  progressions.
\newblock {\em Journal of Combinatorial Theory, Series A} \textbf{71} (1), 168--172, (1995).

\bibitem{naslund2020exponential}
Eric Naslund.
\newblock Exponential bounds for the Erd{\H{o}}s-Ginzburg-Ziv constant.
\newblock {\em Journal of Combinatorial Theory, Series A} \textbf{174}, 105185, (2020).

\bibitem{naslund2020partition}
Eric Naslund.
\newblock The partition rank of a tensor and k-right corners in
  $\mathbb{F}_q^n$.
\newblock {\em Journal of Combinatorial Theory, Series A} \textbf{174}, 105190, (2020).

\bibitem{naslund2022chromatic}
Eric Naslund.
\newblock The chromatic number of $\mathbb{R}^n$ with multiple forbidden
  distances.
\newblock {\em Mathematika}, \textbf{69} (3), 692--718, (2023).

\bibitem{naslund2017upper}
Eric Naslund and Will Sawin.
\newblock Upper bounds for sunflower-free sets.
\newblock In {\em Forum of Mathematics, Sigma}, \textbf{5}. Cambridge University
  Press, (2017).

\bibitem{sauermann2021size}
Lisa Sauermann.
\newblock On the size of subsets of $\mathbb{F}_p^n$ without $p$ distinct
  elements summing to zero.
\newblock {\em Israel Journal of Mathematics} \textbf{243} (1), 63--79, (2021).

\bibitem{sauermann2022finding}
Lisa Sauermann.
\newblock Finding solutions with distinct variables to systems of linear
  equations over $\mathbb{F}_p$.
\newblock {\em Mathematische Annalen} \textbf{386} (1-2), 1--33, (2023).

\bibitem{senger}
Steven Senger and Alex Iosevich.
\newblock Orthogonal systems in vector spaces over finite fields.
\newblock {\em Electronic Journal of Combinatorics} \textbf{15}, R151, (2008).

\bibitem{shparlinski2010point}
Igor~E Shparlinski.
\newblock On point sets in vector spaces over finite fields that determine only
  acute angle triangles.
\newblock {\em Bulletin of the Australian Mathematical Society} \textbf{81} (1), 114--120, (2010).

\bibitem{stanley2011enumerative}
Richard~P Stanley.
\newblock Enumerative combinatorics volume 1 second edition.
\newblock {\em Cambridge studies in advanced mathematics}, (2011).

\bibitem{tao2016notes1}
Terence Tao.
\newblock A symmetric formulation of the Croot-Lev-Pach-Ellenberg-Gijswijt
  capset bound, 2016.
\newblock {\em URL:
  https://terrytao.wordpress.com/2016/05/18/a-symmetric-formulation-of-the-croot-lev-pach-ellenberg-gijswijt-capset-bound/},
  (2016).

\bibitem{tao2016notes}
Terence Tao and Will Sawin.
\newblock Notes on the “slice rank” of tensors, 2016.
\newblock {\em URL: https://terrytao. wordpress.
  com/2016/08/24/notes-on-the-slice-rank-of-tensors}, (2016).

\end{thebibliography}
\end{document}